\newtheorem{thm}{Theorem}
\newtheorem{lem}{Lemma}
\newtheorem{assumption}{Assumption}
\newtheorem{proposition}{Proposition}
\newtheorem{remark}{Remark}
\newcommand{\source}{{THIS IS A PREPRINT VERSION. IF YOU FOUND THIS READING USEFUL FOR YOUR RESEARCH PLEASE CITE THE PUBLISHED VERSION DOI: \href{https://doi.org/10.1109/LCSYS.2021.3136097}{https://doi.org/10.1109/LCSYS.2021.3136097}}}
\def\ps@IEEEtitlepagestyle{}
\title{\LARGE \bf 
Leader-follower synchronization of a network of boundary-controlled parabolic equations with in-domain coupling}
\author{A. Kabalan, F. Ferrante, G. Casadei, A. Cristofaro, and C. Prieur
\thanks{Abbas Kabalan is with Mines Paris - PSL university, 75006 Paris, France, email: abbas.kabalan@minesparis.psl.eu}
\thanks{Francesco Ferrante is with Department of Engineering, University of Perugia, Via G. Duranti, 67, 06125 Perugia, Italy, email: francesco.ferrante@unipg.it}
\thanks{Giacomo Casadei is with Laboratoire Ampere Dpt. EEA of the \'Ecole Centrale de Lyon, Universit\'e de Lyon, 69134 Ecully, France.}
\thanks{Andrea Cristofaro is with Department of Computer, Control and Management Engineering, Sapienza University of Rome, Italy.}
\thanks{Christophe Prieur is with Univ. Grenoble Alpes, CNRS, Grenoble INP, GIPSA-lab 38000 Grenoble, France.}
\thanks{This work has been partially supported by MIAI@Grenoble Alpes (ANR- 19-P3IA-0003)}
}
\begin{document}        
 \maketitle                                       
\begin{abstract}            
In this paper, we study the leader-synchronization problem for a class of partial differential equations with boundary control and \textit{in-domain} coupling. We describe the problem in an abstract formulation and we specialize it to a network of parabolic partial differential equations. We consider a setting in which a subset of the followers is connected to the leader through a boundary control, while interconnections among the followers are enforced by distributed in-domain couplings. Sufficient conditions in the form of matrix inequalities for the selection of the control parameters enforcing exponential synchronization are given. Numerical simulations illustrate and corroborate the theoretical findings.
\end{abstract}
\begin{IEEEkeywords}
Distributed parameter systems; Network analysis and control; Control of networks.
\end{IEEEkeywords}

\section{Introduction}
\subsection{Background and contributions}
\IEEEPARstart{T}{the} problem of consensus and synchronization of multiple agents interacting over a network has been an active domain of research in the past years due to many important applications \cite{olfati2004consensus,dorfler2013synchronization}. Several efforts have been made to develop the theory of synchronization for finite dimensional systems both in the linear \cite{Scardovi2008SynchronizationIN} and the nonlinear case \cite{4287131}.

Recently, researchers have started considering the case in which the agents in the network are infinite-dimensional systems, e.g., systems modeled via partial differential equations (\emph{PDE}s). For these systems, a challenge comes from the fact that sensing and actuation typically take place at the boundary of the domain. First results on synchronization of systems modeled via PDEs can be found in \cite{article}, in which the author considered system modeled by PDEs with in-domain control and in \cite{9120281} with the focus on boundary control. More recently, authors have started considering synchronization with boundary control for different types of PDEs, as in \cite{pilloni2015consensus} for parabolic PDEs and in \cite{aguilar2020leader,chen2020bipartite} for wave equations. A first attempt to consider synchronization for a class of boundary-actuated semilinear PDEs has been proposed in \cite{ferrante2021synchronization}, where the authors considered incremental nonlinearities.

In this paper, we consider parabolic PDEs interacting over a network. The interest behind parabolic PDEs stem from the fact that they are associated to several physical phenomena of interest, such as diffusion, social networks \cite{jiang2014diffusion}, and neural networks \cite{wang2013synchronization}. Recently, this class of systems has been used to model diffusion of epidemics in communities \cite{berestycki2021propagation}, therefore it is natural to consider how this class of PDEs behave in networks.

In \cite{6889559}, the authors tackled the problem of synchronization of a class of boundary controlled parabolic PDEs in which coupling between the agents occurs not only on the boundary but also in the domain. In this paper, we consider a similar setting of interconnected systems. However, we restrict the control to a subset of the agents only so that the synchronization of the network will occur for the controlled agents connected to the leader via the boundary control law and
for the other agents via the \textit{in-domain} couplings. 

 With respect to the current literature, the contribution of this paper is threefold: i) we consider a novel class of linear interconnected dynamical systems with both boundary and \textit{in-domain} couplings; ii) sufficient conditions in the form of matrix inequalities that ensures the synchronization of the network are provided; iii) the feasibility of the proposed matrix inequalities is thoroughly studied and sufficient conditions on the communication graph ensuring synchronization are established. The latter point in particular constitutes a contribution with respect to the existing literature as it allows to determine weather or not a certain network can achieve synchronization and how to find the appropriate coupling to achieve synchronization.

The remainder of the paper is organized as follows: in Section~\ref{sec:ProblemStatement} we introduce some preliminaries and the abstract problem formulation, while in Section~\ref{sec:Parabolic} we  formalize the problem in the case of nodes of the network with partially controlled parabolic dynamics. In Section~\ref{sec:SuffCond} we present the necessary and sufficient conditions on the control parameters and the communication graph to achieve synchronization with respect to the leader. A numerical example is given is Section~\ref{sec:Example}. We conclude with some final remarks in Section~VI.
\subsection{Preliminaries}
\subsubsection{Notation}
 $\mathcal{M}_N(\mathbb{R})$ denotes the set of square $N\times N$ real matrices, $\mathds{1}_N\in\mathbb{R}^N$ is the all-ones vector, and given a matrix $M$, $\Vert M \Vert_F$ indicates the Frobenius  norm of $M$.
Let $X$ be a normed linear vector space,
the symbol $I_X$ ($I_N$) is the identity operator in $X$ (matrix in $ \mathcal{M}_N(\mathbb{R})$). 
Let $a, b$ be real numbers, $\mathbf{L}^2(a,b;\mathbb{R}^n)$ denotes the quotient space of the space of Lebesgue measurable square integrable functions on $(a,b)$ with values in $\mathbb{R}^n$ with respect to the Lebesgue measure. The shorthand notation $\mathbf{L}^2(a,b;\mathbb{R})=\mathbf{L}^2(a,b)$ is used. The symbol $\mathbf{H}^n(a,b)$ stands for the set of $f\in \mathbf{L}^2(a,b)$ such that for all $i=1,2, \dots,n$, $f^i\in \mathbf{L}^2(a,b)$; where $f^i$ stands for the weak derivative of order $i$ of $f$. 
The symbol $D(A)$ stands for the domain of the operator $A$. Let $X$ be a real Hilbert space and $A\colon D(A)\subset X\rightarrow X$ be a linear operator, the notation $A\preceq 0$ indicates that for all $x\in D(A)$, $\langle x, A x\rangle\leq 0$.
For a symmetric matrix $M$, $M\succ 0$ and $M\prec 0$ denote, respectively, positive and negative definiteness. Given $a_1, a_2,\dots, a_n$, the symbol $\operatorname{diag}(a_1, a_2,\dots, a_n)$
stands for the diagonal matrix having $a_1, a_2,\dots, a_n$ as diagonal elements. The Kronecker (tensor) product $\otimes$ is used in the sense of \cite[Definition 4]{ferrante2021synchronization}. The symbol $\ker A$ stands for the kernel of the linear operator $A$.
\subsubsection{Graph theory}
A communication graph is described by an ordered pair ${\mathcal G}=\{{\mathcal V}, {\mathcal E}\}$ in which ${\mathcal V}$ is a set of $n$ {\em nodes} ${\mathcal V}=\{v_1,v_2, \ldots, v_n$\}, ${\mathcal E} \subset {\mathcal V}\times {\mathcal V}$ is a set of {\em edges} $\varepsilon_{jk}$ that models the interconnection between two nodes with the flow of information from node $j$ to node $k$. We denote by $L \in \mathbb{R}^{N \times N}$ the {\em Laplacian matrix} of the graph, with elements defined as $\ell_{kj} = - 1$ if there is an edge between node $k$ and node $j$ and $0$ otherwise for $k\ne j$, and $ \ell_{kk} = -\sum_{i=1,i\neq k}^N \ell_{ki}$. 
+
A path in $\mathcal{G}$ is a sequence of alternating vertices and edges $W=v_0\varepsilon_{01}v_1 \varepsilon_{12}v_2....\varepsilon_{n-1,n}v_n$ such that $\varepsilon_{ij}$ is an edge between $v_{i}$ and $v_{j}$. Two vertices $a$ and $b$ in $\mathcal{G}$ are called connected if there exist a path between $a$ and $b$. The graph is connected if the exist a path between every pair of vertices in $\mathcal{G}$ (see~\cite{godsil2001algebraic}).


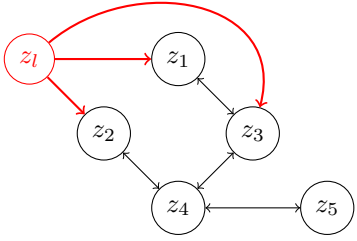
\begin{figure}[t!]
\begin{center}
 \begin{tikzpicture}[node distance={14mm}, main/.style = {draw, circle}] 
\node[main] (2) {$z_2$}; 
\node[main] (1) [above right of=2] {$z_1$};
\node[main] (3) [below right of=1] {$z_3$};
\node[main] (4) [below left of=3] {$z_4$};
\node[main] (5) [below right of=3] {$z_5$};
\node[main] (6) [red] [above left of=2 ] {$z_l$}; 
\draw [<->] (1) -- (3);
\draw [<->] (2) -- (4);
\draw [<->] (3) -- (4);
\draw [<->] (4) -- (5);
\draw [->] (6)[red, thick]  to  (1);
\draw [->] (6)[red, thick] to  (2); 
\draw [->] (6)[red, thick] to  [out=40,in=75,looseness=1.3] (3);  
\end{tikzpicture}
\caption{\label{fig:graph} Example of network considered in this paper: black connections represent the \textit{in-domain} connection between the systems while red connections represents the systems who are communicating with the leader.}
\end{center}
\end{figure}

\section{Problem statement}
\label{sec:ProblemStatement}

We consider a undirected network of $N$ systems together with a leader, the latter being labeled by the index $N+1$. The graph $\mathcal{G}$ associated to such a network can be separated $\mathcal{G}= \mathcal{G}_{l} \, \cup \, \mathcal{G}_{in}$ ($\mathcal{G}_{l} \cap \mathcal{G}_{in} = \emptyset$) where $\mathcal{G}_{l}$ describes the connection between the leader and the followers and $\mathcal{G}_{in}$ describes the interconnection among the followers (see Figure~\ref{fig:graph}). We assume that $\mathcal{G}$ is leader-connected.

\begin{figure}
\centering
\graphicspath{ {img/} }
\includegraphics[scale=0.3]{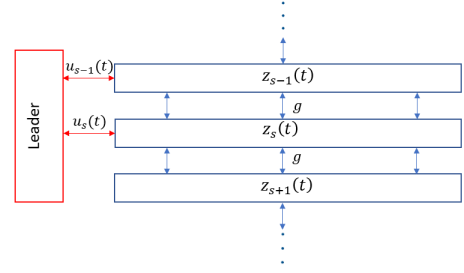}
\caption{ \label{fig:control} Visual representation of the network, with $s$ systems connected to the leader and the others connected through the \textit{in-domain coupling}}
\end{figure}

The $N$ followers are described by abstract dynamical systems of the form 
\begin{subequations} \label{2}
\begin{align}
    &\dot{z}_i= \mathfrak{A} z_i + g_i \sum_{j=1}^N  l_{ij}z_j+ f\label{1}\\
    &\mathfrak{B}z_i=u_i
\end{align}
\end{subequations}
for $i=1,...,N$ , with $l_{ij}$ elements of the Laplacian matrix~$L\in\mathbb{R}^{N\times N}$ associated to the graph $\mathcal{G}_{in}$ which encodes the network interconnections, and where
\begin{flalign*}
    &\mathfrak{A}: D(\mathfrak{A}) \hspace{0.5 em} \longrightarrow \hspace{0.5 em} \mathcal{X}, \,\,\, \mathfrak{B}: D(\mathfrak{B})\hspace{0.5 em} \longrightarrow \hspace{0.5 em} \mathcal{U}\\
    &D(\mathfrak{A}) \subset D(\mathfrak{B}) ,\hspace{0.5 em} \text{for all $i=1,...,N$}\\
    &f: \mathbb{R}^{+} \hspace{0.5 em} \longrightarrow \hspace{0.5 em} \mathcal{X}
\end{flalign*}
the state space $\mathcal{X}$ is a separable real Hilbert space. We suppose that $\mathcal{U}$ is a real vector space. The operator $\mathfrak{A}$ can be thought as the differential operator that governs the dynamics of the agents, and the term $ \sum_{j=1}^N  l_{ij}z_j(t)$  represent some \textit{in-domain couplings} among the different agents. The scalar $g_i \in \mathbb{R}$ is a scaling gain to be designed. Equation (\ref{2}b) is the boundary condition term, where $\mathfrak{B}$ is a trace operator, and $u_i$ in the input for each agent. The term $f$ is an external source term. Furthermore, we define the leader system as
\begin{subequations}
\label{leader}
\begin{equation}
\dot{z}_l= \mathfrak{A}_l z_l+ f
\end{equation}
with 
\begin{equation}
\begin{aligned}
&\mathfrak{A}_l=\mathfrak{A},  
&D(\mathfrak{A}_l)=D(\mathfrak{A})\cap\ker\mathfrak{B}    
\end{aligned}
\end{equation}
\end{subequations}
and accordingly $N$ error coordinate $e_i=z_i-z_l$, for $i=1,\ldots,N$, which represents the synchronization error with respect to the leader.
Bearing in mind that 
$\sum_{j=1}^N l_{ij}=0$, 
the error dynamics can be written as
\begin{subequations}
\begin{align}
    &\dot{e}_i= \mathfrak{A} e_i + g_i \sum_{j=1}^N  l_{ij}e_j \label{3} \\
    &\mathfrak{B}e_i=u_i, \hspace{0.4 em}    \label{4}
\end{align}
\end{subequations}

We define $z=\left(z_1, \cdots , z_N\right)$, $e=(e_1, \cdots , e_N)$, and $u=(u_1, \cdots , u_N)$, and we assume that the control input is selected as follows
\begin{align}
    u=\mathcal{C} (M \otimes I_{\mathcal{X}}) e \label{5}
\end{align}
where \[\displaystyle\mathcal{C}\colon D(\mathcal{C}) \subset \bigoplus_{i=1}^N \mathcal{X} \longrightarrow \bigoplus_{i=1}^N \mathcal{U}\] is a linear diagonal operator and $M \in \mathbb{R}^{N \times N}$ is a diagonal matrix associated to the subgraph $\mathcal{G}_l$ whose entries are $m_i=1$ if node $z_i$ is connected to the leader $z_l$ and $m_i=0$ otherwise.
Then, the error dynamics can be written in a more compact form as follows:
\begin{subequations}
\label{eq:error_dyn}
\begin{equation}
    \dot{e}= \mathcal{A}e + \mathcal{L}_\mathcal{G}e \label{6}
\end{equation}
with:
\begin{align}
& \mathcal{A}= I_N \otimes \mathfrak{A},&&\\ 
& \displaystyle D(\mathcal{A})=\bigoplus_{i=1}^N D(\mathfrak{A}) \cap \ker ((I_N \otimes \mathfrak{B}) - \mathcal{C} (M \otimes I_{\mathcal{X}})) \hspace{0.5 em} \label{D(A)}  \\
\label{eq:defG}
& \mathcal{L}_\mathcal{G}=GL\otimes I_\mathcal{X} ,\hspace{0.5 em}  G=\operatorname{diag}(g_1,...,g_N)&&
\end{align}
\end{subequations}
We consider the following standing assumption.

\begin{assumption}
\label{assu:WellPosed}
The operators $\mathfrak{A}_l$ in \eqref{leader} and $\mathcal{A}$ in \eqref{eq:error_dyn} generate a strongly continuous semigroup, respectively, on $\mathcal{X}$ and $\mathcal{X}^N$.\hfill$\triangle$
\end{assumption}

Now observe that in the coordinates $(z_l, e)$ the set wherein synchronization occur reads:
\begin{equation}\label{set:sync}
\mathcal{S}_l= \{(z_l, e)\in \mathcal{X}^{N+1} \colon e=0\}
\end{equation}
Thus, the problem of synchronizing equations (\ref{2}) boils down to rendering the origin of the error dynamics \eqref{eq:error_dyn} globally exponentially stable.
Recall now that since the operator $\mathfrak{A}$ generates a strongly continuous semigroup, then also $\mathcal{A}$ generates a strongly continuous semigroup. Furthermore, $\displaystyle \forall z\in \mathcal{X}^N$, we have $\Vert\mathcal{L}_\mathcal{G}z\Vert_{\mathcal{X}^N}\leq \Vert GL\Vert_{F} \hspace{0.2em}\Vert z\Vert_{\mathcal{X}^N} $ so $\mathcal{L}_\mathcal{G}$ is a bounded linear operator, then by the perturbation theorem \cite[Theorem 3.2.1]{curtain1995linear}, we have that $\mathcal{A}+\mathcal{L}_\mathcal{G}$ also generates a strongly continuous semigroup.

Next, we provide sufficient conditions to ensure exponential synchronization of the family of systems (\ref{2})-\eqref{leader} interconnected via \eqref{5}. The following result, which is a straightforward adaption of \cite[Theorem 5.1.3, page 217]{curtain1995linear}.
\begin{proposition}\label{th:general_stability} 
Let Assumption~\ref{assu:WellPosed} hold.
Suppose that there exist a bounded positive operator $\mathcal{P}\colon\mathcal{X}^N\to\mathcal{X}^N$ and a positive real number $\delta$ such that $\mathcal{P(A}+\mathcal{L}_\mathcal{G}+\frac{\delta}{2} \mathcal{I})\preceq 0$. Then, the origin of \eqref{eq:error_dyn} is globally exponentially stable. This in turn implies that the set $\mathcal{S}_l$ defined in \eqref{set:sync} is globally exponentially stable  for  \eqref{2}-\eqref{leader} coupled via \eqref{5}. \hfill$\diamond$
\end{proposition}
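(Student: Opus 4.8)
The plan is to run the classical Lyapunov‑function argument with the quadratic form $V(e)=\langle \mathcal{P}e,e\rangle_{\mathcal{X}^N}$, following the proof of \cite[Theorem 5.1.3]{curtain1995linear}; the only adaptations needed are that the Lyapunov relation here is an inequality rather than an equality, and that the rate $\delta$ is tracked explicitly so as to read off the speed of convergence. As already observed right before the statement, Assumption~\ref{assu:WellPosed}, the boundedness of $\mathcal{L}_\mathcal{G}$, and the perturbation theorem \cite[Theorem 3.2.1]{curtain1995linear} guarantee that $\mathcal{A}+\mathcal{L}_\mathcal{G}$ generates a strongly continuous semigroup $(T(t))_{t\geq0}$ on $\mathcal{X}^N$; since $\mathcal{L}_\mathcal{G}$ and $\mathcal{P}$ are bounded, both $\mathcal{A}+\mathcal{L}_\mathcal{G}+\tfrac{\delta}{2}\mathcal{I}$ and $\mathcal{P}(\mathcal{A}+\mathcal{L}_\mathcal{G}+\tfrac{\delta}{2}\mathcal{I})$ have domain $D(\mathcal{A})$, which is dense in $\mathcal{X}^N$. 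Being a bounded positive operator, $\mathcal{P}$ is self-adjoint and there exist $M\geq m>0$ such that $m\|e\|_{\mathcal{X}^N}^2\leq V(e)\leq M\|e\|_{\mathcal{X}^N}^2$ for every $e\in\mathcal{X}^N$.

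First I would restrict attention to \emph{classical} solutions, i.e., to initial conditions $e_0\in D(\mathcal{A})$, for which $t\mapsto e(t):=T(t)e_0$ is continuously differentiable and remains in $D(\mathcal{A})$ for all $t\geq0$ (so that the closed-loop boundary condition encoded in the domain \eqref{D(A)} holds along the trajectory). For such a solution $t\mapsto V(e(t))$ is differentiable and, using the self-adjointness of $\mathcal{P}$ and the fact that $\mathcal{X}^N$ is a real Hilbert space,
\[
\frac{d}{dt}V(e(t))=2\big\langle \mathcal{P}(\mathcal{A}+\mathcal{L}_\mathcal{G})e(t),\,e(t)\big\rangle_{\mathcal{X}^N}.
\]
Evaluating the hypothesis $\mathcal{P}(\mathcal{A}+\mathcal{L}_\mathcal{G}+\tfrac{\delta}{2}\mathcal{I})\preceq 0$ at $e(t)\in D(\mathcal{A})$ gives $\langle \mathcal{P}(\mathcal{A}+\mathcal{L}_\mathcal{G})e(t),e(t)\rangle\leq -\tfrac{\delta}{2}\langle\mathcal{P}e(t),e(t)\rangle=-\tfrac{\delta}{2}V(e(t))$, hence $\frac{d}{dt}V(e(t))\leq -\delta V(e(t))$. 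A standard comparison (Grönwall) argument then yields $V(e(t))\leq e^{-\delta t}V(e_0)$, and combining this with the two-sided bound on $V$ gives $\|e(t)\|_{\mathcal{X}^N}\leq \sqrt{M/m}\,e^{-\delta t/2}\|e_0\|_{\mathcal{X}^N}$.

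Next I would remove the regularity restriction by density: for each fixed $t\geq0$ the operator $T(t)$ is bounded on $\mathcal{X}^N$, and the estimate above holds on the dense subspace $D(\mathcal{A})$, so it extends to $\|T(t)e_0\|_{\mathcal{X}^N}\leq \sqrt{M/m}\,e^{-\delta t/2}\|e_0\|_{\mathcal{X}^N}$ for every $e_0\in\mathcal{X}^N$; equivalently, every mild solution of \eqref{eq:error_dyn} satisfies this bound, which is exactly global exponential stability of the origin of \eqref{eq:error_dyn}. To conclude, I would translate this to the $(z_l,e)$ coordinates: by Assumption~\ref{assu:WellPosed} the leader subsystem \eqref{leader} is well posed and decoupled from $e$, and for any $(z_l,e)\in\mathcal{X}^{N+1}$ the distance to the set $\mathcal{S}_l$ in \eqref{set:sync} equals $\|e\|_{\mathcal{X}^N}$ (the infimum being attained by the element of $\mathcal{S}_l$ with the same $z_l$-component and zero error). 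Hence $\operatorname{dist}\big((z_l(t),e(t)),\mathcal{S}_l\big)=\|e(t)\|_{\mathcal{X}^N}\leq \sqrt{M/m}\,e^{-\delta t/2}\operatorname{dist}\big((z_l(0),e(0)),\mathcal{S}_l\big)$, i.e., $\mathcal{S}_l$ is globally exponentially stable for \eqref{2}--\eqref{leader} coupled via \eqref{5}.

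The only genuinely delicate step is the differentiation of $V$ along trajectories, which is legitimate only for classical solutions; the whole computation must therefore be carried out on $D(\mathcal{A})$ and then pushed to mild solutions by the density/continuity argument, exactly as in the proof of \cite[Theorem 5.1.3]{curtain1995linear}. A secondary point to make explicit is the coercive lower bound $V(e)\geq m\|e\|_{\mathcal{X}^N}^2$ used to pass from $V$ back to the norm: this requires reading ``positive'' as ``boundedly invertible and positive'' (i.e., $\mathcal{P}\succeq m\mathcal{I}$ for some $m>0$); otherwise it should be added as an explicit hypothesis on $\mathcal{P}$.
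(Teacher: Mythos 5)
Your proof is correct. Note that the paper does not actually write out a proof of this proposition: it simply declares it a ``straightforward adaption'' of \cite[Theorem~5.1.3]{curtain1995linear}, whose own proof does \emph{not} use Gr\"onwall on $V$ but instead integrates $\frac{d}{dt}\langle \mathcal{P}e(t),e(t)\rangle$ to obtain square-integrability of orbits, $\int_0^\infty \Vert T(t)e_0\Vert^2\,dt<\infty$, and then invokes Datko's lemma. Your direct route (differentiate $V$ on $D(\mathcal{A})$, Gr\"onwall, push to mild solutions by density, then identify $\operatorname{dist}(\cdot,\mathcal{S}_l)$ with $\Vert e\Vert$) is equally valid and has the advantage of producing the explicit decay rate $\delta/2$ and overshoot $\sqrt{M/m}$. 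The trade-off you correctly identify is real: the Curtain--Zwart/Datko argument needs no coercive lower bound on $\mathcal{P}$ because the dissipation there is measured against $\Vert e\Vert^2$, whereas here the hypothesis $\mathcal{P}(\mathcal{A}+\mathcal{L}_\mathcal{G}+\tfrac{\delta}{2}\mathcal{I})\preceq 0$ measures dissipation against $V$ itself, so \emph{either} route requires $\langle\mathcal{P}e,e\rangle\geq m\Vert e\Vert^2$ for some $m>0$ to get back to the norm. This caveat is harmless in the paper's use of the proposition, since there $\mathcal{P}=P\otimes I_{\mathcal{X}}$ with $P\in\mathcal{M}_N(\mathbb{R})$ positive definite, which is automatically coercive; but your suggestion to read ``positive'' as ``coercive'' (or to add it as a hypothesis) is a legitimate sharpening of the statement.
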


\section{Partially controlled parabolic systems}
\label{sec:Parabolic}
In this section, we specialize the setting considered in the previous section to the case of partially controlled parabolic systems. In particular, we assume that $\mathcal{X}=\mathbf{L}^2(0,1)$, endowed with its standard norm, and that the data in \eqref{2} is as follows:
\begin{subequations}\label{eq:LaplacianOperator}
\begin{align}
    &\mathfrak{A}: D(\mathfrak{A})=\left\{z \in \mathbf{H^2} (0,1) \colon \frac{dz}{dx}(1) = 0\right\} \hspace{0.5 em} \longrightarrow \hspace{0.5 em} \mathbf{L}^2(0,1)\\
    &\mathfrak{A}z=\beta \frac{d^2 z}{d x^2} + \alpha z , \hspace{1em} \beta>0 , \hspace{1em} \alpha \in \mathbb{R} \\
    &\mathfrak{B}:  \mathbf{H}^2(0,1) \hspace{0.5 em} \longrightarrow \mathbb{R}\hspace{0.5 em}\\ 
    &\mathfrak{B}z= \frac{d}{dx}z(0)
\end{align}
\end{subequations}
\begin{remark}
Specializing the setup in Section~\ref{sec:ProblemStatement} to the considered class of parabolic systems enables to come up with a set of sufficient conditions for synchronization that can be easily checked. This is the objective of the remainder of this paper.     
\end{remark}

 To define the control inputs, we split the agents into two sub-groups, i.e., the \textit{leader-disconnected} which do not have access to the leader and the \textit{leader-connected} agents that can exchange information directly with the leader. Without loss of generality, we label the latter from $i=1,\cdots, s\leq N$ and we define the local control input $u_i$ as
\begin{align}
    \displaystyle u_i=\int_0^1 k_{i} m_{i} e_i (x)dx. \label{8}
\end{align}
with $m_i$ element of the matrix $M$ introduced in \eqref{5} and $k_{i} \in \mathbb{R}$ are the controller gain to be designed. 
It is worth noticing that the protocol \eqref{8} is distributed in the sense that only the local error $e_i=z_i-z_l$ is available. The relative errors $e_i-e_j=z_i-z_j$, which contribute to the classic diffusive coupling, are the terms that drive the \textit{in-domain} couplings in \eqref{1}.


The agents not communicating with the leader can only exchange information with other agents via the \textit{in-domain coupling}, in other words for $i=s+1 ,\cdots ,N$, we have $u_i=0$. Finally, we will say that the network is leader-to-all connected or fully controlled if $s=N$ and say partially controlled otherwise. With this choice in mind, the operator $\mathcal{C}$ in \eqref{5} specializes into:
\begin{equation}
\begin{aligned}
&D(\mathcal{C})=\mathbf{L}^2(0,1;\mathbb{R}^N)
&\mathcal{C}h= \left(K \otimes \int_0^1\right) h
\end{aligned}
\label{eq:CalIntegral}
\end{equation}
where $K\coloneqq\operatorname{diag}\lbrace k_i \rbrace_{i=1,\ldots,N}$. Observe that the selection of the data in \eqref{eq:LaplacianOperator}-\eqref{eq:CalIntegral} ensures that Assumption~\ref{assu:WellPosed} holds for the specific class of systems considered henceforth. Indeed, from  \eqref{eq:LaplacianOperator}, $\mathcal{A}_l$ as defined in \eqref{leader} turns out to be the operator associated to the \emph{heat equation with Neumann boundary conditions}, which generates a strongly continuous semigroup on the space $\mathbf{L}^2(0, 1)$; see \cite[Example 2.3.7]{curtain1995linear}. Moreover, a standard eigenvalue analysis coupled with \cite[Theorem 2.3.5, item c, page 41]{curtain1995linear} enables to show that $\mathcal{A}$ generates a strongly continuous semigroup as well; see also Remark~\ref{rem:EigenValues}.

A visual representation of the control architecture is shown in Figure \ref{fig:control}, with $s \leq N$ nodes connected to the leader and the others coupled with \textit{in-domain} connection. We are now ready to introduce the main result of this section. 
\begin{thm}
Let
\begin{align}
&\bar{K}=KM\\
&D=-\beta P\bar{K}+\alpha P + PGL  \label{eq:D_omega}
\end{align}
Suppose that there exists a positive definite matrix $P\in\mathcal{M}_N(\mathbb{R})$ such that
\begin{equation}
    \Omega\coloneqq\begin{bmatrix}
 -\frac{\beta \pi^2}{2} P& \beta P\bar{K}\\\beta (P\bar{K})^T & D+D^T
\end{bmatrix}\prec 0 \label{9}
\end{equation}
Then system \eqref{2}, \eqref{eq:LaplacianOperator} with inputs \eqref{8} achieve synchronization towards the leader with respect to the $\mathbf{L}^2$-norm.
\end{thm}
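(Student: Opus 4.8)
The plan is to invoke Proposition~\ref{th:general_stability} with the bounded positive operator $\mathcal{P}=P\otimes I_{\mathbf{L}^2(0,1)}$, where $P\succ0$ is a solution of~\eqref{9}; the associated quadratic form $V(e)=\langle e,\mathcal{P}e\rangle=\int_0^1 e(x)^{T}Pe(x)\,dx$ is then equivalent to $\Vert e\Vert_{\mathbf{L}^2}^2$, so it suffices to produce $\delta>0$ with $\mathcal{P}(\mathcal{A}+\mathcal{L}_\mathcal{G}+\frac{\delta}{2}I)\preceq0$, i.e. $\langle e,\mathcal{P}(\mathcal{A}+\mathcal{L}_\mathcal{G})e\rangle\le-\frac{\delta}{2}\langle e,\mathcal{P}e\rangle$ for all $e\in D(\mathcal{A})$. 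First I would make the left-hand side explicit: using $\mathcal{P}\mathcal{A}=P\otimes\mathfrak{A}$, $\mathcal{P}\mathcal{L}_\mathcal{G}=PGL\otimes I$, integrating by parts in each $\langle e_i,\partial_{xx}e_j\rangle$, and invoking the boundary conditions built into $D(\mathcal{A})$ — namely $\partial_x e_j(1)=0$ from~\eqref{eq:LaplacianOperator} and $\partial_x e_j(0)=u_j=k_j m_j\int_0^1 e_j=(\bar{K}\bar{e})_j$ from~\eqref{8}, with $\bar{e}\coloneqq\int_0^1 e(x)\,dx\in\mathbb{R}^N$ — one obtains
\[
\langle e,\mathcal{P}(\mathcal{A}+\mathcal{L}_\mathcal{G})e\rangle=-\beta\,e(0)^{T}P\bar{K}\bar{e}-\beta\!\int_0^1 e_x^{T}Pe_x\,dx+\int_0^1 e^{T}(\alpha P+PGL)e\,dx,
\]
with $e(0)=(e_1(0),\dots,e_N(0))^{T}$ and $e_x=(\partial_x e_1,\dots,\partial_x e_N)^{T}$. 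The task is then to bound the indefinite zeroth-order term and the boundary cross-term by the negative Dirichlet term $-\beta\int_0^1 e_x^{T}Pe_x\,dx$, leaving a strictly negative margin proportional to $V(e)$.

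To do this I would split $e(x)=\bar{e}+\tilde{e}(x)$ with $\int_0^1\tilde{e}=0$ (so $\tilde{e}_x=e_x$) and use two elementary ingredients. (i) A matrix-weighted Poincar\'e/Wirtinger inequality, obtained by diagonalizing $P=\sum_k\lambda_k v_k v_k^{T}$ and applying the scalar inequality to each coordinate $v_k^{T}e$ — the sharp constant being the one tied to the Neumann condition $\partial_x e_i(1)=0$, which is what produces the factor $\frac{\pi^2}{2}$ in the $(1,1)$-block of $\Omega$. (ii) The fundamental theorem of calculus together with Cauchy--Schwarz: $e(0)=\bar{e}-\int_0^1(1-y)e_x(y)\,dy$ and $\big\Vert\int_0^1(1-y)e_x(y)\,dy\big\Vert_P^2\le\frac13\int_0^1 e_x^{T}Pe_x\,dx$, which let me trade the pointwise boundary value $e(0)$ for $\bar{e}$ at the expense of part of the Dirichlet budget. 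Allocating $-\beta\int_0^1 e_x^{T}Pe_x\,dx$ between (i) and (ii), and splitting $\int_0^1 e^{T}(\alpha P+PGL)e\,dx=\bar{e}^{T}(\alpha P+PGL)\bar{e}+\int_0^1\tilde{e}^{T}(\alpha P+PGL)\tilde{e}\,dx$, I would regroup: the ``mean'' part collapses exactly to $\bar{e}^{T}(D+D^{T})\bar{e}$ — which is why $D$ in~\eqref{eq:D_omega} is $-\beta P\bar{K}+\alpha P+PGL$, the $-\beta P\bar{K}$ coming from the boundary term via $e(0)=\bar{e}-\cdots$ — while the residual zeroth-order term $\int_0^1\tilde{e}^{T}(2\alpha P+PGL+(PGL)^{T})\tilde{e}\,dx$ is absorbed by the Poincar\'e term thanks to the implication $\Omega\prec0\Rightarrow 2\alpha P+PGL+(PGL)^{T}\prec\frac{\beta\pi^2}{2}P$, which follows by a Schur-complement reduction of~\eqref{9} on its $(1,1)$-block and completing the square, $-\frac{2\beta}{\pi^2}\big(\bar{K}-\frac{\pi^2}{2}I\big)P\big(\bar{K}-\frac{\pi^2}{2}I\big)\preceq0$.

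Collecting the estimates, $\langle e,\mathcal{P}(\mathcal{A}+\mathcal{L}_\mathcal{G})e\rangle$ is bounded above by a quadratic form in $\bar{e}$ and in the boundary deviation $\bar{e}-e(0)$ whose matrix is, up to a sign-fixing congruence (and possibly a Schur step), exactly $\Omega$; since $\Omega\prec0$ this is $\le-\mu\big(\Vert\bar{e}-e(0)\Vert^2+\Vert\bar{e}\Vert^2\big)$ for some $\mu>0$, and the slack still available in the Dirichlet term — which controls $\Vert\tilde{e}\Vert^2$ through Wirtinger — upgrades the bound to $\langle e,\mathcal{P}(\mathcal{A}+\mathcal{L}_\mathcal{G})e\rangle\le-\kappa\Vert e\Vert_{\mathbf{L}^2}^2$ for some $\kappa>0$. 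Taking $\delta=2\kappa/\lambda_{\max}(P)$ verifies the hypothesis of Proposition~\ref{th:general_stability}, hence the origin of~\eqref{eq:error_dyn} is globally exponentially stable; equivalently, system~\eqref{2},~\eqref{eq:LaplacianOperator} with inputs~\eqref{8} synchronizes exponentially to the leader in the $\mathbf{L}^2$-norm.

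The genuinely delicate point is the constant bookkeeping in the second and third paragraphs: choosing the Poincar\'e constant (forced by $\partial_x e_i(1)=0$) and the Cauchy--Schwarz constant so that the resulting matrix inequality is exactly~\eqref{9} rather than a strictly more conservative one, while simultaneously ensuring the zero-mean residual is entirely swallowed — this is precisely where the coefficient $\frac{\pi^2}{2}$ and the exact form of $D$ in~\eqref{eq:D_omega} originate. The remaining ingredients — the integration by parts, the Kronecker-product identities, the well-posedness already granted by Assumption~\ref{assu:WellPosed}, and the appeal to Proposition~\ref{th:general_stability} — are routine.
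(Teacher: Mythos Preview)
Your high–level plan — take $\mathcal{P}=P\otimes I$, integrate by parts, use the boundary conditions from $D(\mathcal{A})$, reduce to a matrix inequality, and invoke Proposition~\ref{th:general_stability} — is exactly the paper's strategy. The divergence, and the gap, is in the decomposition you choose for $e$.

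The paper does \emph{not} split $e$ into its mean $\bar e$ and zero-mean part. It sets $\hat e(x)\coloneqq e(x)-e(0)$, so that $\hat e(0)=0$, and applies the one-endpoint Wirtinger inequality
\[
\int_0^1 e_x^{T}Pe_x=\int_0^1 \hat e_x^{T}P\hat e_x\ \ge\ \frac{\pi^2}{4}\int_0^1\hat e^{T}P\hat e .
\]
With the augmented \emph{function} $\tilde e(x)=(\hat e(x),e(x))\in\mathbb{R}^{2N}$, the boundary term $-2\beta e(0)^{T}P\bar K\!\int_0^1 e$ rewrites as $2\beta\!\int_0^1\hat e^{T}P\bar K e-2\beta\!\int_0^1 e^{T}P\bar K e$, and one obtains directly
$2\langle e,\mathcal{P}(\mathcal{A}+\mathcal{L}_\mathcal{G})e\rangle\le\int_0^1\tilde e^{T}\Omega\,\tilde e$, with no splitting of the Dirichlet budget and no auxiliary Cauchy--Schwarz step. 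The constant $\tfrac{\pi^2}{2}$ in the $(1,1)$-block is simply $2\times\tfrac{\pi^2}{4}$ from this Wirtinger bound.

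Your route — zero-mean Poincar\'e on $\tilde e=e-\bar e$ (constant $\pi^2$) together with $\lVert\bar e-e(0)\rVert_P^2\le\tfrac13\int_0^1 e_x^{T}Pe_x$ — forces you to allocate a fraction $\theta$ of the Dirichlet term to absorb $\int_0^1\tilde e^{T}S\tilde e$, $S=\alpha P+\tfrac12(PGL+(PGL)^{T})$, and the remaining $1-\theta$ to control $w=\bar e-e(0)$. Your own Schur argument only yields $S\prec\tfrac{\beta\pi^2}{4}P$, which in the worst case forces $\theta\ge\tfrac14$; but matching the $(1,1)$-block of $\tfrac12\Omega$ requires $3(1-\theta)\ge\tfrac{\pi^2}{4}$, i.e.\ $\theta\le 1-\tfrac{\pi^2}{12}\approx0.178$. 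These are incompatible, and indeed the resulting Schur complement needs $\tfrac12(D+D^{T})+\tfrac{\beta}{12(1-\theta)}\bar K^{T}P\bar K\prec0$, which is \emph{not} implied by $\Omega\prec0$ (the latter gives only $\tfrac12(D+D^{T})+\tfrac{\beta}{\pi^2}\bar K^{T}P\bar K\prec0$, and $\tfrac{1}{12(1-\theta)}>\tfrac{1}{\pi^2}$ whenever $\theta>1-\tfrac{\pi^2}{12}$). So your argument proves a strictly more conservative sufficient condition, not~\eqref{9}.

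The fix is minimal: replace the mean-shift by the boundary-value shift $\hat e=e-e(0)$ and drop the Cauchy--Schwarz step entirely. Everything else in your write-up (well-posedness, the form of the boundary term, the appeal to Proposition~\ref{th:general_stability}) is correct.
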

\begin{proof}
Let $\mathcal{P}=P\otimes I_\mathcal{X}$ and observe that $\mathcal{P}$ is a positive operator. Then, one gets: 
\begin{equation}
\begin{aligned}
&2\langle\mathcal{P(A+\mathcal{L}_\mathcal{G}})e,e\rangle=2\beta\int_0^1 e^T P\frac{d^2}{dx^2}e+2\alpha \int_0^1 e^T Pe\\
&+2\int_0^1 e^T PGLe=2\beta e^T P\frac{d}{dx}e\Big\vert_{0}^{1}-2\beta\int_0^1 \frac{d}{dx}e^T P\frac{d}{dx}e\\
&+2\alpha \int_0^1 e^T Pe +2\int_0^1 e^T PGL e \\
&=-2 \beta e^T(0)P\bar{K}\int_0^1e -2\beta \int_0^1 \frac{d}{dx}e^T P\frac{d}{dx}e \\
&+2\alpha \int_0^1 e^T Pe + 2\int_0^1 e^T PGL e 
\end{aligned}
\label{eq:calc_int}
\end{equation}
where, for simplicity, we dropped the independent variable.
Thus, by denoting $\hat{e}=e-e(0)$, the following holds:
\begin{align}
&e^T(0)P\bar{K}\int_0^1e=
-\int_0^1(\hat{e}^T+e) P\bar{K}e \label{eq:integral1}
\end{align}
Moreover, by noticing that
$$
-\int_0^1 \frac{d}{dx}e^T P\frac{d}{dx}e=-\int_0^1 \frac{d}{dx}\hat{e}^T P\frac{d}{dx}\hat{e}
$$
and by using the so-called variation of Wirtinger’s inequality; see \cite[Page 17]{krstic2008boundary}, the latter gives:
\begin{align}
&-\int_0^1 \frac{d}{dx}e^T P\frac{d}{dx}e\leq -\frac{\pi^2}{4}\int_0^1 \hat{e}^T P\hat{e} \label{eq:integral2}
\end{align}
Combining \eqref{eq:calc_int} with \eqref{eq:integral1} and \eqref{eq:integral2}, and by defining $\tilde{e}=(\hat{e},e)$, we get
\begin{equation}
\label{eq:OmegaBound}
\begin{aligned}
2\langle\mathcal{P(A+\mathcal{L}_\mathcal{G}})e,e\rangle \leq \int_0^1 \Tilde{e}^T \Omega \tilde{e}
\end{aligned}
\end{equation}
with $\Omega$ and $D$ defined, respectively, in \eqref{eq:D_omega} and \eqref{9}. From \eqref{9}, there exists $\delta>0$ such that $\Omega+\delta I_{2N}\preceq 0$. Moreover, notice that since  $\displaystyle\delta\langle e, e\rangle \leq \delta\langle \tilde{e}, \tilde{e}\rangle$, from \eqref{eq:OmegaBound} one gets:
\begin{flalign*}
2\left\langle\mathcal{P}\left(A+\mathcal{L}_\mathcal{G}+\frac{\delta}{\text{$2$}}. \!\mathcal{I}\right)e,e\right\rangle &\leq \int_0^1\!\! \tilde{e}^T\left(\Omega+\delta I_{2N}\right)\tilde{e}
\leq 0
\end{flalign*}
Thus, by invoking Proposition~\ref{th:general_stability} the results is established.
\end{proof}

\section{Sufficient conditions for synchronization}
\label{sec:SuffCond}
In the previous section, sufficient conditions in the form of matrix inequalities for synchronization of a class of parabolic interconnected PDEs are given. In this section, we  analyze the effect of the control parameters on the synchronization dynamics both in the fully controlled ($s=N$) and partially controlled ($s<N$) case. To do so, in the remainder of the paper, we consider the following simplifying assumptions:
\begin{equation}
 \beta=1, P=I_N, G=gI_N, K=kI_s   
\label{eq:Params}
\end{equation} 
 which in particular imply that all the control gains $k_i=k$ and all the \textit{in-domain} scaling $g_i=g$ are identical. The fact of having a common gain for all agents is ubiquitous in networks control (see for instance \cite{dorfler2013synchronization}, \cite{Scardovi2008SynchronizationIN} and \cite{4287131}). Furthermore, note that, under \eqref{eq:Params}, the matrix inequalities \eqref{9} becomes an LMI.

\subsection{ Fully controlled case}
First, we consider the fully controlled  scenario, i.e., $s=N$ namely all the nodes communicate with the leader. The following result holds.

\begin{lem}
Consider the network of \eqref{2}, with \eqref{eq:LaplacianOperator}, coupled with the leader \eqref{leader} through \eqref{8}. Let $\alpha$ and $k$ be such that:
\begin{equation}\label{eq:omega_full}
\Omega_{N1}:=\begin{bmatrix}
 -\frac{\pi^2}{2} I_N& k I_N\\
k I_N & 2(\alpha - k)I_N
\end{bmatrix}\prec 0
\end{equation}
Then, for any $g \leq 0$ inequality $\eqref{9}$ holds and synchronization is achieved with respect to the $\mathbf{L}^2$-norm.
\end{lem}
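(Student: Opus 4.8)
The plan is to show that, under the simplifying assumptions~\eqref{eq:Params} together with the fully controlled hypothesis $s=N$, the matrix $\Omega$ appearing in~\eqref{9} splits as $\Omega_{N1}$ plus a negative semidefinite term, so that the hypothesis~\eqref{eq:omega_full} already forces $\Omega\prec 0$; the claim then follows by invoking the Theorem of Section~\ref{sec:Parabolic} with $P=I_N$.

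First I would carry out the substitutions. Since $s=N$ every node is leader-connected, hence $M=I_N$ and $\bar K=KM=kI_N$. Plugging $\beta=1$, $P=I_N$, $G=gI_N$ into~\eqref{eq:D_omega} gives $D=(\alpha-k)I_N+gL$; as the network is undirected the Laplacian is symmetric, $L=L^{T}$, so $D+D^{T}=2(\alpha-k)I_N+2gL$ while $\beta P\bar K=kI_N$. Substituting into~\eqref{9} then yields
\[
\Omega=\begin{bmatrix}-\tfrac{\pi^{2}}{2}I_N & kI_N\\ kI_N & 2(\alpha-k)I_N+2gL\end{bmatrix}
=\Omega_{N1}+\begin{bmatrix}0 & 0\\ 0 & 2gL\end{bmatrix}.
\]

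Next I would use the standard fact that the Laplacian of an undirected graph is symmetric and positive semidefinite: it is symmetric and weakly diagonally dominant with nonnegative diagonal entries, hence $L\succeq 0$ (see~\cite{godsil2001algebraic}). Consequently, whenever $g\le 0$ the block $2gL$ is negative semidefinite, and so is the whole perturbation $\begin{bmatrix}0 & 0\\ 0 & 2gL\end{bmatrix}$. Therefore, for any nonzero $v\in\mathbb{R}^{2N}$,
\[
v^{T}\Omega v=v^{T}\Omega_{N1}v+v^{T}\begin{bmatrix}0 & 0\\ 0 & 2gL\end{bmatrix}v<0,
\]
because the first summand is strictly negative by~\eqref{eq:omega_full} and the second is nonpositive. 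Hence $\Omega\prec 0$, i.e.\ \eqref{9} holds with $P=I_N$; since the data~\eqref{eq:LaplacianOperator}--\eqref{eq:CalIntegral} guarantee Assumption~\ref{assu:WellPosed}, the Theorem of Section~\ref{sec:Parabolic} then gives synchronization of~\eqref{2},~\eqref{eq:LaplacianOperator} with the inputs~\eqref{8} towards the leader in the $\mathbf{L}^{2}$-norm.

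I do not expect a genuine obstacle here. The whole argument rests on the decomposition $\Omega=\Omega_{N1}+\operatorname{diag}(0,2gL)$ and on the positive semidefiniteness of the graph Laplacian, which is exactly what makes the sign restriction $g\le 0$ do the work. The only point deserving a line of attention is that adding a negative semidefinite matrix to a negative definite one preserves strict negative definiteness, which is immediate from the quadratic-form inequality above. In short, this Lemma is a corollary of the Theorem that isolates the admissible range of the in-domain coupling gain in the fully controlled regime.
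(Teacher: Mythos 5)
Your proof is correct and follows essentially the same route as the paper: decompose $\Omega$ as $\Omega_{N1}$ plus a block-diagonal perturbation involving $gL$, use positive semidefiniteness of the undirected Laplacian with $g\le 0$, and invoke the synchronization theorem. The only (harmless) discrepancy is that you correctly obtain $2gL$ from $D+D^{T}$ where the paper writes $gL$; the sign argument is unaffected.
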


\begin{proof}
In light of \eqref{eq:Params}, \eqref{9} reads as
\begin{align}\label{proof:lem1}
\Omega=\Omega_{N2}:=\begin{bmatrix}
 -\frac{\pi^2}{2} I_N& k I_N\\
k I_N & 2(\alpha - k)I_N + gL
\end{bmatrix}\prec 0
\end{align}
Note that \eqref{proof:lem1} can be written as $\Omega_{N2}=\Omega_{N1}+L_s$ where $L_s=\begin{bmatrix}
0 & 0\\0 & gL
\end{bmatrix} \preceq 0$. Then if \eqref{eq:omega_full} holds, this guarantees that \eqref{proof:lem1} holds too and thus in view of Theorem \ref{th:general_stability}, synchronization is achieved. 
\end{proof}

This last result proves that in the fully controlled case, the \textit{in-domain coupling} term $gL$ plays no \textit{necessary} role in achieving synchronization. Therefore, we shift our attention to $\Omega_{N1}$ in \eqref{eq:omega_full}. Notice that $\Omega_{N1}=\Tilde{\Omega} \otimes I_N$ where
$$
\Tilde{\Omega}=\begin{bmatrix}
 -\frac{\pi^2}{2} & k \\
k  & 2(\alpha - k)
\end{bmatrix}$$ 

Thus, \eqref{eq:omega_full} holds if $\Tilde{\Omega}$ is definite negative. Negative definiteness of $\Tilde{\Omega}$ is equivalent to the conditions $\operatorname{trace}(\Tilde{\Omega})<0$ and $\det(\Tilde{\Omega})>0$, which lead to
\begin{flalign}
    &k>\alpha-\frac{\pi^2}{4} \label{10}\\
    &k^2-\pi^2k+\pi^2\alpha<0 \label{11}
\end{flalign}
In particular, solving \eqref{11} with respect to $k$, it turns out that \eqref{11} is equivalent to
\begin{align}
 \label{13}
    &\frac{\pi^2}{2}-\frac{\pi}{2}\sqrt{\pi^2-4\alpha}<k<\frac{\pi^2}{2}+\frac{\pi}{2}\sqrt{\pi^2-4\alpha}\\
    &\frac{\pi^2}{4}-\alpha>0  \label{12}
\end{align}
Thus, given $\alpha$ satisfying (\ref{12}), $k$ needs to be selected so that \eqref{10} and \eqref{13} hold.

\begin{remark}
\label{rem:EigenValues}
To fully understand the effect of the control action, consider the open-loop equations, that is $k=0$. Then, the only possibility for the systems to synchronize on the leader for different initial conditions is to to have the source term $f=0$. Then, solutions to the error dynamics \eqref{6} converge to zero if and only if the operator $\mathfrak{A}$ defined in \eqref{eq:LaplacianOperator} and with domain redefined, according to $k=0$, as
\begin{equation}
    \label{eq:newDom}
\displaystyle D(\mathfrak{A})=\left\{z \in \mathbf{H}^2(0,1)\colon \frac{dz}{dx}(0)=\frac{dz}{dx}(1)=0\right\}
\end{equation}
It is well-known that exponential stability of the associated strongly continuous semigroup holds if $\displaystyle\sup_{n\geq 1}\Re(\lambda_n)<0$, where $\{\lambda_n, n\geq 1\}$ are the eigenvalues of $\mathfrak{A}$ with \eqref{eq:newDom}; see
\cite[Theorem 2.3.5, items c and d]{curtain1995linear}. Standard computations show that
$\lambda_j=\alpha-j^2 \pi^2$ for $j=1, 2,\dots$. Therefore, global exponential stability of the error dynamics holds if an only if $\alpha<0$. On the other hand, condition \eqref{12} shows that when $0\leq \alpha<\frac{\pi^2}{4}$ synchronization is achieved for suitable selection of $k$. This shows that the proposed synchronization policy enables to achieve synchronization even when the local dynamics are unstable.  
\end{remark}

\subsection{Partially controlled case}

We consider now the case in which $s<N$, namely not all the agents communicate with the leader. Thus in order to achieve synchronization the \textit{in-domain coupling} will play a fundamental role. For the sake of simplicity, we introduce the following notation:
\begin{align}
&\bar{K}= k\underbrace{\begin{bmatrix}
 I_s& \mathbf{0}_{s\times (N-s)}\\
\mathbf{0}_{(N-s)\times s} & \mathbf{0}_{(N-s)\times (N-s)}
\end{bmatrix}}_{P_s}
\label{eq:Ps}
\end{align}

\begin{thm}\label{th:main_res}
Consider the network of \eqref{2}, with \eqref{eq:LaplacianOperator}, coupled with the leader \eqref{leader} through \eqref{8}. Let the graph $\mathcal{G}=\mathcal{G}_{l} \, \cup \, \mathcal{G}_{in}$ be connected. Suppose that the following conditions holds:
\begin{flalign}
        \alpha &< \frac{s\pi^2}{4N} \label{15} \\ 
        \frac{\pi^2}{2}-\frac{\pi}{2} \sqrt{\pi^2-4\frac{N}{s}\alpha}&<k<\frac{\pi^2}{2}+\frac{\pi}{2} \sqrt{\pi^2-4\frac{N}{s}\alpha} \label{16}
\end{flalign}
Then, there exists a scalar $g<0 $ such that $\eqref{9}$ holds and thus synchronization is achieved with respect to the $\mathbf{L}^2$-norm.
\end{thm}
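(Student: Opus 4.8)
The plan is to invoke the matrix inequality~\eqref{9} with the structured choice $\bar K = kP_s$ and to exhibit a suitable negative scalar $g$ by exploiting the structure of the Laplacian $L$ of the connected graph $\mathcal{G}$. First I would specialize~\eqref{9} under the simplifying assumptions~\eqref{eq:Params}, so that the off-diagonal block becomes $kP_s$ and the $(2,2)$-block becomes $2(\alpha I_N - kP_s) + gL$; since $P=I_N$, $G=gI_N$, and $D = -kP_s + \alpha I_N + gL$. Thus~\eqref{9} is equivalent to
\[
\Omega = \begin{bmatrix} -\tfrac{\pi^2}{2} I_N & kP_s \\ kP_s & 2\alpha I_N - 2kP_s + gL \end{bmatrix} \prec 0.
\]
By a Schur complement with respect to the (negative definite) top-left block, this is equivalent to
\[
2\alpha I_N - 2kP_s + gL + \tfrac{2k^2}{\pi^2} P_s \prec 0,
\]
i.e.\ $\;gL \prec -2\alpha I_N + \bigl(2k - \tfrac{2k^2}{\pi^2}\bigr) P_s =: R$. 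Note that conditions~\eqref{16} are exactly designed so that $2k - 2k^2/\pi^2 > 2\tfrac{N}{s}\alpha$ (equivalently $k^2 - \pi^2 k + \tfrac{N}{s}\pi^2\alpha < 0$), which will be the quantitative content I exploit.

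Next I would diagonalize the picture on the eigenspaces of $L$. Since $\mathcal{G}$ is connected and undirected, $L = L^T \succeq 0$ with a simple zero eigenvalue and eigenvector $\mathds{1}_N$; write $L = \lambda_2 vv^T + (\text{positive part on } \mathds{1}_N^\perp \text{ with eigenvalues} \geq \lambda_2 > 0)$ more carefully, $L$ restricted to $\mathds{1}_N^\perp$ is $\succeq \lambda_2 I$ with $\lambda_2 > 0$ the algebraic connectivity. For $g < 0$, $gL$ is negative semidefinite with kernel $\mathrm{span}\{\mathds{1}_N\}$; on $\mathds{1}_N^\perp$ we have $gL \preceq g\lambda_2 I \to -\infty$ as $g \to -\infty$. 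Hence on $\mathds{1}_N^\perp$ the inequality $gL \prec R$ holds for all sufficiently negative $g$, because $R$ is a fixed symmetric matrix. The only direction that resists this is $\mathds{1}_N$ itself, where $gL$ contributes nothing: there one needs $\mathds{1}_N^T R\, \mathds{1}_N < 0$ and, more delicately, one needs the full block to be negative definite, not merely along $\mathds{1}_N$. A standard way to handle this is: $gL \prec R$ $\iff$ $R - gL \succ 0$; decompose any $x \in \mathbb{R}^N$ as $x = c\mathds{1}_N + y$, $y \in \mathds{1}_N^\perp$, and show $x^T(R - gL)x \geq x^T R x - g\lambda_2 \|y\|^2$, then bound the cross terms in $x^T R x$ using Young's inequality so that a sufficiently negative $g$ dominates. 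This reduces the problem to showing $\mathds{1}_N^T R\, \mathds{1}_N < 0$.

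The main obstacle — and the crux of the proof — is precisely verifying $\mathds{1}_N^T R\, \mathds{1}_N < 0$, which reads
\[
-2\alpha N + \Bigl(2k - \tfrac{2k^2}{\pi^2}\Bigr)\,\mathds{1}_N^T P_s \mathds{1}_N < 0,
\]
and since $\mathds{1}_N^T P_s \mathds{1}_N = s$ (the number of leader-connected nodes), this is
\[
s\Bigl(2k - \tfrac{2k^2}{\pi^2}\Bigr) > 2\alpha N \quad\Longleftrightarrow\quad k^2 - \pi^2 k + \tfrac{N}{s}\pi^2 \alpha < 0,
\]
which is exactly~\eqref{16}; feasibility of this quadratic in $k$ requires the discriminant $\pi^4 - 4\tfrac{N}{s}\pi^2\alpha > 0$, i.e.\ $\alpha < \tfrac{s\pi^2}{4N}$, which is~\eqref{15}. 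So the two hypotheses of the theorem are exactly what is needed to make the $\mathds{1}_N$-direction strictly negative, and connectivity ($\lambda_2 > 0$) is exactly what lets $g \to -\infty$ kill every other direction. I would close by assembling these two facts into a single quantitative bound on $g$ via the Young-inequality splitting argument above, concluding that for all $g$ below an explicit threshold $g^\star < 0$ one has $\Omega \prec 0$, whence~\eqref{9} holds and Theorem~1 gives synchronization in the $\mathbf{L}^2$-norm.
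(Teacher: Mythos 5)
Your argument is correct and follows the same backbone as the paper's proof: specialize $\Omega$ under \eqref{eq:Params}, take the Schur complement with respect to the block $-\tfrac{\pi^2}{2}I_N$ to reduce \eqref{9} to $Q+gL\prec 0$ with $Q=2\alpha I_N-2kP_s+\tfrac{2k^2}{\pi^2}P_s$, and observe that connectivity forces $\ker L=\operatorname{span}\{\mathds{1}_N\}$ so that the only direction $g$ cannot help with is $\mathds{1}_N$, where \eqref{15}--\eqref{16} are precisely the condition $\mathds{1}_N^TQ\mathds{1}_N<0$. The one genuine difference is how the existence of $g$ is concluded: the paper invokes Finsler's lemma to get the equivalence between $\exists g:\Omega_s\prec0$ and negativity of $Q$ on $\ker U$ (where $L=U^TU$), whereas you prove the sufficiency direction of that lemma by hand, decomposing $x=c\mathds{1}_N+y$ with $y\perp\mathds{1}_N$, using $y^TLy\geq\lambda_2\|y\|^2$ and Young's inequality on the cross term to show that a sufficiently negative $g$ dominates. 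Your route is more elementary and self-contained, and it additionally makes explicit that the resulting $g$ is negative with a computable threshold $g^\star$; the paper's route is shorter and, being an equivalence, also shows that the scalar condition along $\mathds{1}_N$ is necessary for feasibility (which is how \eqref{18} is used to motivate \eqref{15}--\eqref{16}). One bookkeeping slip to fix: with your definition $R=-Q=-2\alpha I_N+(2k-\tfrac{2k^2}{\pi^2})P_s$, the condition needed along the consensus direction is $\mathds{1}_N^TR\,\mathds{1}_N>0$ (equivalently $\mathds{1}_N^TQ\mathds{1}_N<0$), not $\mathds{1}_N^TR\,\mathds{1}_N<0$ as you wrote; the inequality $s\bigl(2k-\tfrac{2k^2}{\pi^2}\bigr)>2\alpha N$ you then derive is the correct one and matches \eqref{16}, so only the intermediate sign needs correcting.
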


\begin{proof}
In light of \eqref{eq:Params} and $s<N$, \eqref{9} reads as
\begin{equation}
\label{eq:Omegas}
\Omega=\Omega_{s}:=\begin{bmatrix}
 -\frac{\pi^2}{2} I_N& k P_s\\
k P_s & 2\alpha I_N - 2kP_s +gL
\end{bmatrix}\prec 0
\end{equation}
Therefore, from Schur's complement the following equivalence can be established
\begin{align}
   -\Omega_s\succ 0\iff \mathcal{D} \coloneqq 2kP_s-2\alpha I_N-gL-2\frac{k^2}{\pi^2}P_s \succ 0 \label{17}
\end{align}
Thus, from \eqref{eq:Omegas}-\eqref{17}
\begin{equation}
    \mathds{1}_N^T \mathcal{D} \mathds{1}_N= 2ks-2\alpha N- 2\frac{k^2}{\pi^2}s>0 
\label{18}
\end{equation}
which, by solving with respect to $k\in\mathbb{R}$, leads to (\ref{15}) and (\ref{16}). 
Let $Q\coloneqq 2\alpha I_N-2kP_s +\frac{2k^2}{\pi^2}P_s$ and $L=U^TU $, with $U$ being the incidence matrix. Combining \eqref{17} and Finsler's lemma (see, e.g., \cite{boyd1994linear}), the following items turn out to be equivalent:
\begin{itemize}
\item[$(i)$] $\exists g \in \mathbb{R}$ : $\Omega_s\prec0$
\item[$(ii)$] $x^TQx <0$ for all $x\in \text{ker}(U)$
\end{itemize}
We conclude the proof by showing that item $(ii)$ follows from \eqref{18}. To this end, notice that since by assumption the graph is connected, one has that $\ker(U)=\operatorname{span}\{\mathds{1}_N\}$. At this stage, 
observe that $\mathds{1}_N^T Q\mathds{1}_N= -2ks+2\alpha N + 2\frac{k^2}{\pi^2}s$. Thus, from \eqref{18}, $\mathds{1}_N^T Q\mathds{1}_N<0$. This shows that item $(ii)$ above holds, thereby concluding the proof.
\end{proof}

\begin{remark}
    If the graph $\mathcal{G}$ is not connected, then we can apply Theorem \ref{th:main_res} for each of its $p$ connected component $\mathcal{G}_i$, $\cup_{i=1}^p \mathcal{G}_i=\mathcal{G}$, each with its own $N_i$ and $s_i$ that should satisfies equation \eqref{15}, \eqref{16}. The only case in which such $g$ does not exist is when there is an isolated connected component $\mathcal{G}_i$ whose nodes do not communicate with the leader (namely there is an $i$ such that $s_i=0$). 
\end{remark}

\begin{figure}
\graphicspath{ {img/} }
\psfrag{t}[1][1][1]{$t$}
\psfrag{ub}[1][1][1]{$$}
\psfrag{z1t}[1][1][.6]{\quad$z_1(1,t)$}
\psfrag{z2t}[1][1][.6]{\quad$z_2(1,t)$}
\psfrag{z3t}[1][1][.6]{\quad$z_3(1,t)$}
\psfrag{z4t}[1][1][.6]{\quad$z_4(1,t)$}
\psfrag{z5t}[1][1][.6]{\quad$z_5(1,t)$}
\psfrag{zlt}[1][1][.6]{\quad$z_l(1,t)$}
\includegraphics[width=9cm ,height=4.7cm]{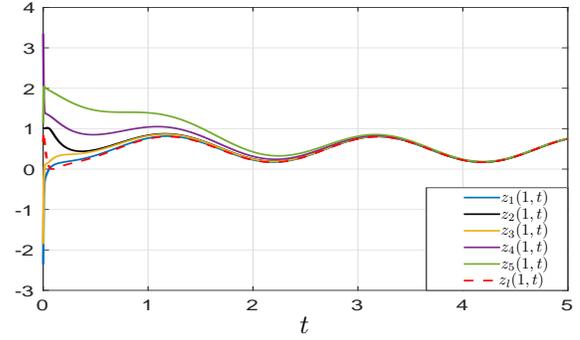}
\caption{\label{fig:error_time} Synchronization of the followers towards the leader: the boundary $z_i(1,t)$ evolution over time.}
\end{figure}

\begin{figure}
\graphicspath{ {img/} }
\psfrag{zs}[1][1][1]{$x$}
\psfrag{et01}[1][1][.7]{\qquad$\overline{e}(\cdot, 0.1)$}
\psfrag{et05}[1][1][.7]{\qquad$\overline{e}(\cdot, 0.5)$}

\psfrag{et1}[1][1][.7]{\qquad$\overline{e}(\cdot, 1)$}

\psfrag{et2.5}[1][1][.7]{\qquad$\overline{e}(\cdot, 2.5)$}
\includegraphics[width=9cm ,height=4.7cm]{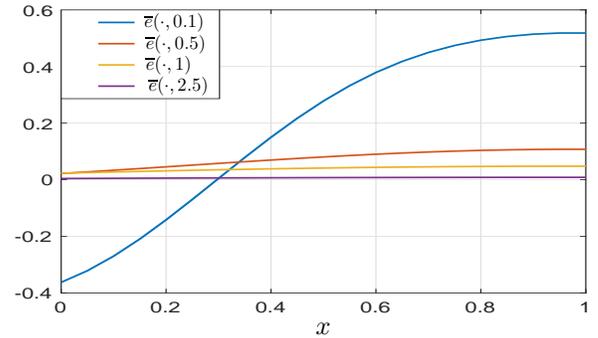}
\caption{\label{fig:error_av}Time evolution of $\bar{e}$ in \eqref{eq:av_error}. As we can clearly see, the mean error converges to $0$ with time.}
\end{figure}

\section{Numerical simulations}
\label{sec:Example}
We consider a group of $N=5$ agents of which $s~=~3$ are connected to the leader. The dynamics of each agent are governed by \eqref{2}-\eqref{eq:LaplacianOperator} with $\alpha=0$, and $ f(t)=(x\mapsto (1+\cos(2\pi x)) \sin(\pi t))$. The interconnection topology we consider is as in \figurename~\ref{fig:graph}. We design $k$ by using \eqref{16} and then seek for a $g$ that verifies \eqref{9}. In particular, since Theorem $2$ guarantees the existence of $g$, by fixing $k=3$ so that \eqref{16} is satisfied, one can easily solve \eqref{proof:lem1} (which is a linear matrix inequality) in $g$. Indeed, by selecting $g=-2$, the inequality \eqref{proof:lem1} is fulfilled.
\figurename~\ref{fig:error_time} shows the evolution of the boundary\footnote{Simulations have been performed in \texttt{Matlab} using the finite difference method.} $z_i(1,t)$ from the initial condition:
$z_{1,0}(x)=0.5+2\cos(5\pi x)+ \cos(\pi x)$, $z_{2,0}(x)=1, z_{3,0}(x)= 2\cos(5\pi x), z_{4,0}(x)= 1.5-2\cos(5\pi x), z_{5,0}(x)= 0.5 \cos (7\pi x)$ and $z_{l,0}(x)=2+\cos(\pi x)+2\cos(7 x)$.. The picture clearly confirms that the actual states synchronize. In \figurename~\ref{fig:error_av}, we show the evolution of the average error 
\begin{align}
 \label{eq:av_error} \overline{e}(x,t)=\sum_{i=1}^N e_i(x,t)
 \end{align} 
at different times. The figure suggests that synchronization happens 
In \figurename~\ref{fig:5} and \figurename~\ref{fig:6}, we consider respectively the case in which the connection to the leader are lost and the case in which the \textit{in-domain coupling} is absent. In the former, i.e., $k=0$, the agents still achieve synchronization among them but not on the leader's trajectory. In the latter, i.e., $g=0$, only the $s=3$ agents connected to the leader achieve synchronization on the leader. These results confirm the theoretical findings presented previously and show that in a general setting, both the boundary control and the \textit{in-domain coupling} are necessary to achieve synchronization of the full network.


\begin{figure}
\graphicspath{ {img/} }
\psfrag{t}[1][1][1]{$t$}
\psfrag{ub}[1][1][1]{$$}
\psfrag{z1t}[1][1][.6]{\quad$z_1(1,t)$}
\psfrag{z2t}[1][1][.6]{\quad$z_2(1,t)$}
\psfrag{z3t}[1][1][.6]{\quad$z_3(1,t)$}
\psfrag{z4t}[1][1][.6]{\quad$z_4(1,t)$}
\psfrag{z5t}[1][1][.6]{\quad$z_5(1,t)$}
\psfrag{zlt}[1][1][.6]{\quad$z_l(1,t)$}
\includegraphics[width=9cm ,height=4.7cm]{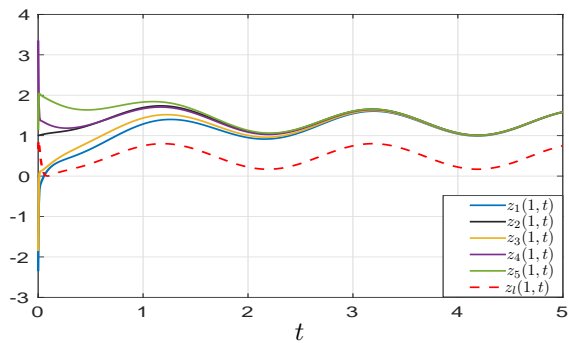}
\caption{The case where $k=0$. None of the agents is connected to the leader, however the agents can communicate with each other. We see the synchronisation behavior due to the dynamics but not towards the leader.}
\label{fig:5}
\end{figure}

\begin{figure}
\graphicspath{ {img/} }
\psfrag{t}[1][1][1]{$t$}
\psfrag{ub}[1][1][1]{$$}
\psfrag{z1t}[1][1][.6]{\quad$z_1(1,t)$}
\psfrag{z2t}[1][1][.6]{\quad$z_2(1,t)$}
\psfrag{z3t}[1][1][.6]{\quad$z_3(1,t)$}
\psfrag{z4t}[1][1][.6]{\quad$z_4(1,t)$}
\psfrag{z5t}[1][1][.6]{\quad$z_5(1,t)$}
\psfrag{zlt}[1][1][.6]{\quad$z_l(1,t)$}
\includegraphics[width=9cm ,height=4.7cm]{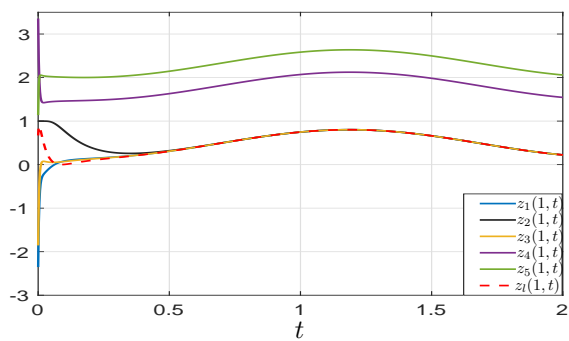}
\caption{The case where $g=0$. No interconnection between the agents, however $s=3$ agents are connected to the leader. As we can clearly see, only these agents synchronises to the leader while the other agents remain isolated.}
\label{fig:6}
\end{figure}

\section{Conclusion and future outlook}
 In this paper, we considered the problem of synchronization of a class of interconnected  infinite-dimensional dynamical systems. The problem is first presented in a general setup and later is specialized to partially controlled parabolic equations with interconnections among the agents taking place both at the boundary and {\it in-domain}. Sufficient conditions for synchronization towards the leader in the form of matrix inequalities have been established. A thorough analysis of the feasibility of such conditions is carried out. 
 The theoretical results are supported by numerical examples, where different network topologies are considered to illustrate the variety of possible synchronization behaviours for the agents.  Future studies will be focused on a deeper analysis of the role of the network topology in the synchronization process as well as nonlinear PDEs. Inspired by epidemics diffusion in communities, an application to cluster synchronization is part of our ongoing research. The extension towards more general dynamics, such as coupled ODE--PDEs, is also under study. 
\bibliography{biblio.bib}
\end{document}